\theoremstyle{plain}
\newtheorem{lemma}{Lemma}
\newtheorem{theorem}[lemma]{Theorem}
\newtheorem{proposition}[lemma]{Proposition}
\newtheorem{remark}[lemma]{Remark}
\newcommand{\bean}{\begin{eqnarray*}}
\newcommand{\eean}{\end{eqnarray*}}
\newcommand{\Nbean}{\begin{eqnarray}}
\newcommand{\Neean}{\end{eqnarray}}
\newcommand{\ba}{\begin{array}}
\newcommand{\ea}{\end{array}}
\newcommand{\R}{\mathbb R}
\newcommand{\N}{\mathbb N}
\newcommand{\I}{1 \kern-0.25em{\rm l}} 
\newcommand{\twodots}{..\kern0.0em}
\newcommand{\Beta}{\text{\rm Beta}}
\def\wrt{w.\,r.\,t.\ }
\title[Stability of the optimal filter in a HMM with multiplicative noise]{Stability of the optimal filter in a hidden Markov model with multiplicative noise}
\author{Birgit Debrabant}
\author{Wilhelm Stannat}
\keywords{Stability, Optimal filter, Multiplicative noise}
\subjclass[2000]{93E11, 93E15, 60G35}
\begin{document}
\maketitle
\begin{abstract}
We consider a hidden Markov model with multiplicative noise emerging from studies of software reliability.
We show the stability of the optimal filter \wrt general initial conditions in the total variation- and $L^p$-norm and deduce explicit rates.
Remarkably, stability turns out to be independent of the ergodic behavior of the signal.
\end{abstract}

\section{Introduction}
The stability of nonlinear filters is a field of active research, see e.\,g.\ \cite{Oxford}, the introduction of \cite{vanHandel} and references therein.
However, the majority of results requires the signal process to be ergodic or stable in some sense.
In addition, most of the results are obtained for signals observed with additive noise. The case of nonergodic signals and also the case of signals observed with multiplicative noise still remains mostly open. 
In this context,
the present article studies the stability of the optimal filter in the following hidden Markov model:
\begin{eqnarray}
\text{Signal: } X_{n} &=& b X_{n-1} W_{n},\label{Eq:Signal}\\
\text{Observation: } Y_n &=& X_n^{-1} G_n, \qquad n \in \N,
\label{Eq:Beobachtung}
\end{eqnarray}
where $W_n$, $n \in \N$, are independent identically $\Beta(\alpha,\beta)$-distributed random variables, $\alpha,\beta>0$, describing the noise incorporated in the unknown signal, $b$ is a positive parameter depending on which the unknown signal process can be ergodic or nonergodic and where $G_n$, $n \in \N$, are independent $\Gamma(1,\beta)$-distributed.
Hence, the observation $Y_n$ depends on $X_n$ via multiplication with the independent noise $G_n$.
Thus, model \eqref{Eq:Signal} and \eqref{Eq:Beobachtung} is an example of filtering a signal observed with
multiplicative noise.
Note that although to logarithmize leads to a classical linear model with additive noise, stability cannot be studied immediately with known methods such as proposed in e.\,g.\ \cite{vanHandel2, DoucMoulinesRitov} since the corresponding noise terms are rather irregular e.\,g.\ neither unimodal nor do they have light tails.

Essentially, the above model appears in \cite{Bather} as example for models admitting explicit invariant conditional distributions. In our case this amounts to the fact that the incorporated assumptions on the distributions of signal and observation together with a corresponding initial distribution, i.\,e.\\
\begin{tabular}{ll}
signal:& $(X_n/bX_{n-1}|X_{n-1}) \sim \Beta(\alpha,\beta)$\\
initial distribution:& $X_0\sim \Gamma(\lambda,q)$, where $q=\alpha+\beta$ and $\lambda>0$\\
observation:& $(Y_n|X_n) \sim \Gamma(X_n, \beta)$\\
\end{tabular}~\\
imply the following explicit updating rules:

\begin{tabular}{ll}
posterior of $X_{n}$:& $(X_n|Y_{1:n}) \sim \Gamma(\lambda_{n},q)$, where $Y_{1:n}=Y_1, \ldots, Y_{n}$\\
prior of $X_{n+1}$:& $(X_{n+1}|Y_{1:n}) \sim \Gamma(\lambda_n/b, \alpha)$\\
1-step ahead prediction:& $(b X_{n+1}/\lambda_n|Y_{1:n}) \sim \mbox{Pearson Type VI}$\\& with parameters $\beta$ and $\alpha$, cp.\ \cite{JohnsonKotz}\\
posterior of $X_{n+1}$:& $(X_{n+1}|Y_{1:n+1}) \sim \Gamma(\lambda_{n+1},q)$,\\& where $\lambda_{n+1}=\frac{\lambda_n}b+ y_{n+1}$
\end{tabular}

To study software reliability, model \eqref{Eq:Signal} and \eqref{Eq:Beobachtung} was later applied in \cite{SingpurwallaWilson, Singpurwalla} as enhancement to the Kalman filter taking into account that failure data tends to be highly skewed and observational errors are not mainly caused by instrumental inaccuracies. Thereby, the observables $Y_n$ can be interpreted as interfailure times of some software.
The $X_n$ play the role of unknown parameters steering their distribution.
To model an evolution of the software the parameters evolve according to \eqref{Eq:Signal}. The value of $b$ is typically unknown and indicates if we have a tendency of increasing reliability since e.\ g.\
$E(Y_n|Y_{1:n-1},X_0) \;=\; 2 b^{-n} \left(X_0+\sum_{j=1}^{n-1} b^j Y_j \right)$
obviously tends to infinity if $b< 1$.

Our stability results give the dependence of the optimal filter $\pi_n^{y_{1:n}}$, that is the regular conditional distribution of $X_n$ given the observations $y_{1:n}=(y_k)_{k=1, \ldots, n}$, on the initial distribution $\pi_0$ of $X_0$.
To cover a wider range of admissible initial conditions we extend the assumptions of \cite{Bather, Singpurwalla} and suppose the initial distribution of $X_0$ to be a mixed Gamma-distribution with the following density
\begin{equation}
\label{IniCond}
\pi_0(x) \;\propto\; x^{q-1} \int_0^\infty \lambda^q e^{-\lambda x} d U_0(\lambda), \qquad x>0,
\end{equation}
where $q=\alpha+\beta$ and $U_0$ is a probability measure on a compact subset of $(0,\infty)$. Such a mixture conserves the conjugacy of this distribution, cp.\ Lemma \ref{conj} which shows that all the posterior distributions are of a similar type.

The pure Gamma case, where $U_0$ is a Dirac measure is easy to treat and will serve us as an introductory example, see Section \ref{Sec:pure}.
Provided that the unknown true initial condition $\tilde\pi_0$ is absolutely continuous \wrt the assumed initial condition $\pi_0$ with a density bounded away from 0,
we show stability in the total variation norm (almost surely given the observations) and in the $L_p$-norm for arbitrary $p>0$ (expectation \wrt the observations $Y_1, Y_2,\ldots$) both with explicit geometric rates.

In Sections \ref{Sec:mixed} and \ref{Sec:mixed2} we pass on to the general mixed Gamma initial condition. Under assumptions similar to those in Sec.\ \ref{Sec:pure} we show stability with geometric rates.
Concerning the total variation norm, Theorem \ref{Thm:stability} gives that almost surely
\begin{equation}
\label{Eq:Rate}
|| \pi_n^{y_{1:n}}-\tilde \pi_n^{y_{1:n}}||_{var} \;=\; \mathcal O (\delta^{-n}), \qquad n \to \infty,
\end{equation}
for $\delta<E(W_1^\beta)^{-\frac1\beta}$, which coincides with the pure Gamma case.
Note that the rate of stability $\frac 1 \delta$ is independent of the parameter $b$ in the signal. That is, the filter is stable whether or not the signal is ergodic. However, the constant on the right hand side of \eqref{Eq:Rate} will depend on the given sequence $y_1, y_2, \ldots$ of observations.
For the $L_p$-norm with $p \in \left(0,-\frac{u_0}{\bar B}\ln E( W_1^\beta ) \right)$,
Theorem \ref{Thm:Lp} yields
\[E\left(|| \pi_n^{Y_{1:n}}-\tilde \pi_n^{Y_{1:n}}||_{var}^p\right) =\mathcal O(\rho^n), \qquad n\to \infty,\]
where
$\rho=\left(E ( W_1^\beta ) e^{\frac{p \bar B}{u_0}} \right)^{\frac p{p+\beta}}$ and where $\bar B=\bar B(\beta, u_0, o_0)$ is a positive constant specified in the theorem. These rates are smaller compared to $\rho=E(W_1^p)$ in the pure Gamma case.

All stability statements of this article are based on an universal stability result of \cite{Stannat}.
Therein, a general bound for the total variation distance of optimal filters \wrt to the initial condition is deduced in terms of the Lipschitz contraction $\chi_n^*$ of a certain parabolic ground state transform $P^*_n$ associated with $\pi_n^{y_{1:n}}$.
More precisely, suppose that the optimal filter is erroneously initialized
with initial condition $\pi_0$ and that $\tilde\pi_0$ is the true initial condition. If $\tilde\pi_0 \ll\pi_0$ with density $h_0=\frac{d \tilde \pi_0}{d\pi_0}$ it follows that $\tilde\pi_n^{y_{1:n}} \ll \pi_n^{y_{1:n}}$ with density $h_n$ given by $h_n=P_n^* \circ \cdots \circ P_1^* h_0$, $n \in \N$, where $P_n^* f:=\frac{g(\cdot, y_n) \hat P(f  \pi_{n-1}^{y_{1:n-1}})}{\pi_n^{y_{1:n}} \int g(s, y_n)\hat P \pi_{n-1}^{y_{1:n-1}}(s)ds}$, $\hat P$ resp.\ $\hat p$ is the dual operator of the transition probability $P$ resp.\ the transition density $p$ of the signal, that is $P(X_{n+1} \in dx_{n+1}|X_n)=p(X_n,x_{n+1})dx_{n+1}$ and
$\hat P f=\int p(x,\cdot) f(x)dx$, and where $g$ denotes the regular conditional density of the observation given the signal, i.\,e.\ $P(Y_n\in dy|X_n)=g(X_n,y)dy$.
It follows that the error between the true optimal filter
$\tilde \pi_n^{y_{1:n}}$ and the erroneously initialized filter $\pi_n^{y_{1:n}}$ can be expressed in
terms of the Lipschitz contractions $\chi_n^*$ of the Markovian transition kernels $P_n^*$,
whereby
$\displaystyle\chi_n^*:=\sup_{f \in Lip}\frac{||P_n^*f||_{Lip}}{||f||_{Lip}}$ and where $Lip$ is the space of all Lipschitz continuous functions with the corresponding norm $||\cdot||_{Lip}$
:

\begin{proposition}[cp.\ Prop.\ 2.1 of \cite{Stannat}]
\label{Prop:Stannat}
For the total variation distance of the true optimal filter $\tilde\pi_n^{y_{1:n}}$ to a wrongly initialized $ \pi_n^{y_{1:n}}$ the following explicit bound is valid:
\begin{equation}
|| \pi_n^{y_{1:n}}-\tilde \pi_n^{y_{1:n}}||_{var}
\;\le \;
\frac{\hat \sigma_n}{2 H}
\cdot
\prod_{k=1}^n \chi_k^* \cdot ||h_0||_{Lip}, \qquad n \in \N,
\end{equation}
provided that $\tilde \pi_0$ has $\pi_0$-density $h_0$ which is bounded away from 0 by a positive constant $H$
and with
$\hat \sigma_n
=
\int\limits_0^\infty \!\! \int\limits_0^\infty |x_1-x_2|\, d\pi_n^{y_{1:n}}(x_1)d\pi_n^{y_{1:n}}(x_2)
$.
\end{proposition}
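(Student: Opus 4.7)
The plan is to reduce the total variation distance between $\tilde\pi_n^{y_{1:n}}$ and $\pi_n^{y_{1:n}}$ to an $L^1$ control of the Radon--Nikodym density $h_n=d\tilde\pi_n^{y_{1:n}}/d\pi_n^{y_{1:n}}$ about its mean, then to propagate the Lipschitz seminorm through the iterated Markov kernel representation $h_n=P_n^*\circ\cdots\circ P_1^*h_0$ supplied in the excerpt.

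The first step is to verify that the absolute continuity $\tilde\pi_0\ll\pi_0$ is propagated by the filter recursion. A direct computation with the Bayes formula shows that $P_k^*$ is indeed a Markov kernel (in particular $P_k^*\mathbf 1=\mathbf 1$), so that integrating both sides of $h_n=P_n^*h_{n-1}$ against $\pi_n^{y_{1:n}}$ confirms $\int h_n\,d\pi_n^{y_{1:n}}=1$, and moreover the lower bound $h_0\geq H$ is transported pointwise to $h_n\geq H$ for every $n$. Using the dual formulation of the total variation norm,
\[\|\tilde\pi_n^{y_{1:n}}-\pi_n^{y_{1:n}}\|_{var}=\tfrac12\int |h_n-1|\,d\pi_n^{y_{1:n}}.\]

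The second step is the key centering trick. Since $\int h_n(z)\,d\pi_n^{y_{1:n}}(z)=1$, for each $x$
\[|h_n(x)-1|=\Bigl|\int\bigl(h_n(x)-h_n(z)\bigr)\,d\pi_n^{y_{1:n}}(z)\Bigr|\le \|h_n\|_{Lip}\!\int|x-z|\,d\pi_n^{y_{1:n}}(z),\]
and integrating this bound against $\pi_n^{y_{1:n}}(dx)$ produces the crucial geometric quantity
\[\int|h_n-1|\,d\pi_n^{y_{1:n}}\le \|h_n\|_{Lip}\cdot\hat\sigma_n.\]
The third step is then to propagate the Lipschitz seminorm through the iterated Markov kernel: by the very definition of the operator norm $\chi_k^*$ on the space of Lipschitz functions, $\|P_k^*f\|_{Lip}\le \chi_k^*\|f\|_{Lip}$, and iterating yields $\|h_n\|_{Lip}\le \prod_{k=1}^n\chi_k^*\cdot \|h_0\|_{Lip}$. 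Combining the three steps gives the claimed bound, with the factor $1/H$ arising when one passes from the absolute Lipschitz information on $h_0$ to its \emph{relative} version $\|h_0/H\|_{Lip}$ in order to take into account that the normalization $\int h_0\,d\pi_0=1$ and the lower bound $h_0\ge H$ together constrain the admissible class of densities.

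The main technical obstacle is the careful bookkeeping needed in the first step: one must check that the formula $P_n^*f=\hat P(f\pi_{n-1}^{y_{1:n-1}})/\hat P\pi_{n-1}^{y_{1:n-1}}$, which looks like a purely formal cancellation of the common factor $g(\cdot,y_n)$ in the stated definition, indeed defines a Markov kernel and that the cocycle $h_n=P_n^*h_{n-1}$ coincides with the genuine Radon--Nikodym derivative of $\tilde\pi_n^{y_{1:n}}$ with respect to $\pi_n^{y_{1:n}}$. Once this consistency is in place, the remaining Lipschitz-contraction argument and the centering estimate are routine, and the bound follows by a single chain of inequalities.
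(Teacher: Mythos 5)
First, a caveat: the paper does not prove this proposition itself --- it is imported verbatim from Prop.\ 2.1 of \cite{Stannat} --- so there is no in-paper proof to compare against, and I assess your argument on its own terms. Your three-step skeleton (dual formula for the total variation norm, centering against $\int h_n\,d\pi_n^{y_{1:n}}=1$ to produce $\hat\sigma_n$, contraction of the Lipschitz seminorm along the kernels $P_k^*$) is the right strategy and matches the standard route; steps two and three are individually sound.

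The genuine gap is in your first step, and it sits exactly where the constant $H$ has to enter. After cancelling $g(\cdot,y_n)$ one has $P_n^*f=\hat P(f\pi_{n-1}^{y_{1:n-1}})/\hat P\pi_{n-1}^{y_{1:n-1}}$, and the exact one-step identity for the true density is
\begin{equation*}
h_n \;=\; \frac{Z_n}{\tilde Z_n}\,P_n^*h_{n-1},
\qquad Z_n=\int g(s,y_n)\hat P\pi_{n-1}^{y_{1:n-1}}(s)\,ds,
\end{equation*}
with $\tilde Z_n$ the analogous normalizer for $\tilde\pi$. Hence $P_n^*\circ\cdots\circ P_1^*h_0$ is only \emph{proportional} to $d\tilde\pi_n^{y_{1:n}}/d\pi_n^{y_{1:n}}$. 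Your assertion that $\int P_n^*\cdots P_1^*h_0\,d\pi_n^{y_{1:n}}=1$ ``because $P_k^*\mathbf 1=\mathbf 1$'' is false: $P_k^*\mathbf 1=\mathbf 1$ is a statement about the function $P_k^*\mathbf 1$, not about the $\pi_n^{y_{1:n}}$-integral of $P_k^*f$, and $\pi_n^{y_{1:n}}$ is not an adjoint or invariant measure for $P_n^*$ in the required sense. The accumulated constant $\prod_{k=1}^n Z_k/\tilde Z_k=\bigl(\int P_n^*\cdots P_1^*h_0\,d\pi_n^{y_{1:n}}\bigr)^{-1}$ is a likelihood ratio of the observations and can exceed $1$, so your chain of inequalities, as written, delivers the bound \emph{without} the factor $1/H$ --- a strictly stronger claim that your argument does not establish. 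The repair is short: Markov kernels preserve lower bounds, so $P_n^*\cdots P_1^*h_0\ge\inf h_0\ge H$, whence $\int P_n^*\cdots P_1^*h_0\,d\pi_n^{y_{1:n}}\ge H$ and the normalizing constant is at most $1/H$; feeding this into $\|h_n\|_{Lip}$ yields exactly $\frac{\hat\sigma_n}{2H}\prod_k\chi_k^*\,\|h_0\|_{Lip}$. Your closing explanation of the $1/H$ (passing to a ``relative version $\|h_0/H\|_{Lip}$'') happens to produce the right numerical factor but is not an argument; the hypothesis $h_0\ge H$ enters only through the lower bound on the normalizing integral just described.
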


Note that the formulation in \cite{Stannat} is more general and gives a bound also in the case $H=0$.

Clearly, for our model \eqref{Eq:Signal} and \eqref{Eq:Beobachtung} we obtain
\begin{eqnarray*}
p(x,y)=\frac{(bx)^{1-q}}{\Beta(\alpha,\beta)} y^{\alpha-1} (bx-y)^{\beta-1}\I_{[0,bx]}(y)\qquad\text{and}\\
\hat p(x,dy)=p(y,x)dy \linebreak[1]=\frac{(by)^{1-q}}{\Beta(\alpha,\beta)}x^{\alpha-1}(by-x)^{\beta-1} \I_{[b^{-1}x,\infty)}(y)dy.
\end{eqnarray*}

\section{Stability \wrt pure Gamma initial conditions}
\label{Sec:pure}

We consider the particular case
$\pi_0(x) \propto x^{q-1}\lambda^q e^{-\lambda x}$, $x>0$,
for some $\lambda>0$, hence $X_0$ is $\Gamma(\lambda,q)$-distributed.
Using the recursion
\begin{eqnarray}
\label{Eq:posterior}
\pi_{n+1}^{y_{1:n+1}}(x)
&\propto&
g(x,y_{n+1}) \int p(s,x) \pi_n^{y_{1:n}}(ds),
\end{eqnarray}
it is straightforward to show that the corresponding optimal filters $\pi_n^{y_{1:n}}$, $n \in \N$, are again Gamma-distributed, namely $\Gamma(\lambda_n,q)$ with  $\lambda_n=b^{-n}\lambda+ \sum_{j=1}^n b^{j-n} y_j$.

To study the asymptotic dependence of the optimal filter on the initial condition we apply Prop.\ \ref{Prop:Stannat}. Since
\begin{equation}
p_{n}^*(x,dy) \;=\; \frac{\lambda_{n-1}^\beta}{\Gamma(\beta)} (y-b^{-1}x)^{\beta-1} e^{-\lambda_{n-1}(y-b^{-1}x)} \I_{(b^{-1}x,\infty)}(y) dy,
\end{equation}

we find $||P_{n}^*||_{Lip}=b^{-1}$, $n \in \N$.
Further
\begin{eqnarray*}
\hat \sigma_n
&\le&
\left(
\int\limits_0^\infty \!\! \int\limits_0^\infty
(x_1-x_2)^2
d\Gamma(\lambda_n,q)(x_1)d\Gamma(\lambda_n,q)(x_2)
\right)^\frac12
\;=\;
\frac{\sqrt{2q}}{\lambda_n}, \qquad n\in \N.
\end{eqnarray*}

Prop.\ \ref{Prop:Stannat} then implies
\begin{equation}
\label{Eq:pure}
|| \pi_n^{y_{1:n}}-\tilde \pi_n^{y_{1:n}}||_{var}
\;\le \;
\frac{\sqrt{2q}||h_0||_{Lip}}{2 H}
(b^n \lambda_n)^{-1}
, \qquad n\in \N,
\end{equation}
if the true initial condition $\tilde \pi_0$ has a $\pi_0$-density which is bounded away from 0 by some constant $H>0$.

The following lemma analyses the limiting behavior of $b^n\lambda_n=\lambda+\sum_{j=1}^n b^j y_j$ which evidently plays a crucial role concerning stability:

\begin{lemma}
\label{div}
$P(\liminf_{j \to \infty} \{b^jY_j>\delta^j\})=1$ for arbitrary $\delta<E(W_1^\beta)^{-\frac1\beta}$.
In particular, $\sum_{j=1}^\infty b^j Y_j=+\infty$ almost surely.
\end{lemma}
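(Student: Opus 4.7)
The plan is to iterate the signal equation to express $b^n Y_n$ as a product of independent factors, apply Markov's inequality to a small negative moment of $b^n Y_n$, and conclude by Borel--Cantelli.

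First, iterating \eqref{Eq:Signal} gives $X_n = b^n X_0 \prod_{k=1}^n W_k$, and combining with \eqref{Eq:Beobachtung} yields
\[
b^n Y_n \;=\; X_0^{-1}\, G_n \prod_{k=1}^n W_k^{-1},
\]
where $X_0$, $(W_k)_{k\ge 1}$ and $(G_k)_{k\ge 1}$ are mutually independent.

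Next, for any $p\in(0,\beta)$, all of $E(X_0^p)$, $E(G_1^{-p})=\Gamma(\beta-p)/\Gamma(\beta)$ and $E(W_1^p)$ are finite. By Markov's inequality applied to the positive random variable $(b^n Y_n)^{-p}$,
\[
P(b^n Y_n \le \delta^n) \;=\; P\bigl((b^n Y_n)^{-p}\ge \delta^{-np}\bigr) \;\le\; \delta^{np}\, E(X_0^p)\, E(G_1^{-p})\, E(W_1^p)^{n}.
\]
By monotonicity of $L^p$-norms on the probability space, $E(W_1^p)^{1/p}\le E(W_1^\beta)^{1/\beta}$ for $p\le\beta$, hence $\delta^p E(W_1^p)\le\bigl(\delta\, E(W_1^\beta)^{1/\beta}\bigr)^p$. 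Under the hypothesis $\delta< E(W_1^\beta)^{-1/\beta}$, this quantity is strictly less than $1$, so the bounds above are geometrically summable in $n$. Borel--Cantelli then yields $P(\liminf_n\{b^n Y_n>\delta^n\})=1$.

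For the final assertion, since $W_1\in(0,1)$ almost surely one has $E(W_1^\beta)<1$ and thus $E(W_1^\beta)^{-1/\beta}>1$; picking any $\delta$ in the nonempty open interval $(1,E(W_1^\beta)^{-1/\beta})$, on the almost sure event just constructed $b^j Y_j>\delta^j\to\infty$ eventually, so in particular $\sum_j b^j Y_j=+\infty$. I do not foresee a real obstacle; the only subtlety is that $E(G_1^{-\beta})=\infty$ prevents taking $p=\beta$ directly, but the monotonicity of $L^p$-norms absorbs this loss back into the $\beta$-moment condition actually appearing in the statement.
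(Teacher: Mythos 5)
Your proof is correct and follows essentially the same route as the paper: a geometrically summable bound on $P(b^jY_j\le\delta^j)$ extracted from the multiplicative decomposition $b^{-j}X_j=X_0\prod_{k=1}^jW_k$, followed by Borel--Cantelli and the observation that $E(W_1^\beta)^{-1/\beta}>1$ to get divergence of the series. The only (minor) difference is in how the tail bound is obtained: the paper integrates the conditional $\Gamma(X_j,\beta)$ density of $Y_j$ directly, dropping the exponential to capture the small-ball exponent $\beta$ exactly and thereby sidestepping the divergence of $E(G_1^{-\beta})$, whereas you use Markov's inequality with a negative moment of order $p<\beta$ and recover the stated threshold via Lyapunov's inequality $E(W_1^p)^{1/p}\le E(W_1^\beta)^{1/\beta}$; both yield the same range of admissible $\delta$.
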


\begin{proof}
We may assume $\delta\ge 0$.
Note that
\begin{eqnarray}
\label{Eq:bound}
P(b^jY_j\le \delta^j) &=& E\left(\frac{X_j^\beta}{\Gamma(\beta)} \int_0^{\delta^j b^{-j}}y^{\beta-1} e^{-X_jy}dy \right)\nonumber\\
&\le&
\frac{\delta^{j\beta}}{\Gamma(\beta+1)}\cdot \underbrace{E\left((b^{-j}X_j)^\beta\right) }_{=\left(E(W_1^\beta)\right)^j E\left( X_0^\beta \right)}
\end{eqnarray}
is summable for
$\delta < E\left( W_1^\beta \right)^{-\frac1\beta}$.
Hence, the Borel-Cantelli lemma yields\linebreak[4] $P(\liminf_{j \to \infty} \{b^jY_j>\delta^j\})=1$.
Since $E(W_1^\beta)^{-\frac1\beta}>1$, we can choose $\delta>1$ which implies that $\sum_{j=1}^\infty b^j Y_j$ diverges to infinity almost surely.
\end{proof}

\begin{remark}
\label{Rm:limit}
Lemma \ref{div} remains valid for any initial condition $\pi_0$ satisfying \mbox{$E(X_0^\beta)<\infty$.}
\end{remark}

Concerning almost sure stability, Lemma \ref{div} implies that \eqref{Eq:pure} converges to 0 almost surely and behaves as $\mathcal O(\delta^{-n})$ for every $\delta < E ( W_1^\beta )^{-\frac 1 \beta}$.

Moreover, concerning stability in the $L^p$-norm, for $0<p<\beta$ we find
\begin{eqnarray*}
E((b^n Y_n)^{-p})
&=&
E\left(
\int_0^\infty (b^n y)^{-p} \frac{X_n^\beta}{\Gamma(\beta)} y^{\beta-1} e^{-X_n y} dy\right)\\
&=&
\frac{\Gamma(\beta-p)}{\Gamma(\beta)}
E \left(
(b^{-n}X_n)^p
\right)
\;=\;
\frac{\Gamma(\beta-p)E(X_0^p)}{\Gamma(\beta)} E\left(W_1^p\right)^n,
\end{eqnarray*}
$n \in \N$. Therefore, the optimal filter is stable in the corresponding $L^p$-norm and satisfies
\begin{eqnarray*}
E\left(|| \pi_n^{Y_{1:n}}-\tilde \pi_n^{Y_{1:n}}||_{var}^p\right)
&=& \mathcal O\left( \rho^n \right), \qquad n \to \infty,
\end{eqnarray*}
with $\rho=E(W_1^p)=\frac{\Beta(\alpha+p,\beta)}{\Beta(\alpha,\beta)}$.

\section{Stability in the total variation norm}
\label{Sec:mixed}
We return to the more general mixed Gamma initial conditions of the form \eqref{IniCond} and deduce an estimate of the total variation distance of $\pi_n^{y_{1:n}}$ \wrt differing initial conditions in Thm.\ \ref{stability} which then implies almost sure stability with geometric rates, cp.\ Thm.\ \ref{Thm:stability}.

\begin{theorem}
\label{stability}
Let $\pi_0$ and $\tilde \pi_0$ be such that
\begin{itemize}
\item
$\pi_0(x) \propto x^{q-1} \int_0^\infty \lambda^q e^{-\lambda x} d U_0(\lambda)$ for some probability measure $U_0$ with support $[u_0,o_0] \subset (0,\infty)$, and
\item
$\tilde \pi_0$ is a probability density on $\R_+$ such that the corresponding density $h_0 :=\frac{\tilde\pi_0}{\pi_0}$ is Lipschitz continuous and $h_0 \ge H>0$ for some constant $H$.
\end{itemize}

Let $y_{1:n} \in \R_+^n$ and let $\pi_n^{y_{1:n}}$ resp.\ $\tilde \pi_n^{y_{1:n}}$ be the optimal filter \wrt $\pi_0$ resp.\ $\tilde \pi_0$.
Then
\begin{equation}
\label{Eq:bound2}
|| \pi_n^{y_{1:n}}-\tilde \pi_n^{y_{1:n}}||_{var}
\;\le \;
\frac{Q}{b^{n}u_n} \prod_{k=0}^{n-1} \left(1+B \frac{o_0-u_0}{b^{k}u_k} \right)||h_0||_{Lip}
, \qquad n \in \N,
\end{equation}

where $Q=\frac{\sqrt{q(q+1)}}{\sqrt 2H}$ and $B=\frac12\left(
\beta(\beta+1)
-\frac{2\beta^2 u_0}{o_0}
+\frac{\beta(\beta+1)o_0^2}{u_0^2}
\right)^\frac12
$.
\end{theorem}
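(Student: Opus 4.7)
My plan is to apply Proposition~\ref{Prop:Stannat} in the mixed-Gamma setting. The first step is a conjugacy result: iterating the recursion~\eqref{Eq:posterior} exactly as in Section~\ref{Sec:pure}, induction on $n$ shows that
\[
\pi_n^{y_{1:n}}(x) \propto x^{q-1}\int_0^\infty \lambda^q e^{-\lambda x}\,dU_n(\lambda)
\]
for some probability measure $U_n$ supported on $[u_n, o_n]$, where the endpoints evolve by the pure-Gamma recursion $u_n = u_{n-1}/b + y_n$ and $o_n = o_{n-1}/b + y_n$. Hence $b^n u_n = u_0 + \sum_{j=1}^n b^j y_j$, $o_n - u_n = b^{-n}(o_0-u_0)$, and moreover $o_k/u_k \le o_0/u_0$ for all $k$ since $y_k > 0$ forces the ratio of endpoints to be non-increasing in $k$. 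The pure-Gamma bound on $\hat\sigma_n$ now carries over directly: from $E(X^2\mid\lambda) = q(q+1)/\lambda^2$ together with $\lambda \ge u_n$ on the support of $U_n$,
\[
\hat\sigma_n \le \bigl(2\mathrm{Var}_{\pi_n^{y_{1:n}}}(X)\bigr)^{1/2} \le \sqrt{2q(q+1)}/u_n,
\]
so $\hat\sigma_n/(2H) \le Q/u_n$. The remaining $b^{-n}$ in the prefactor of~\eqref{Eq:bound2} must therefore emerge entirely out of the product $\prod_{k=1}^n \chi_k^*$, which in the pure case $o_0 = u_0$ collapses to $b^{-n}$ and recovers~\eqref{Eq:pure}.

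The technical crux is thus the per-step bound $\chi_k^* \le b^{-1}\bigl(1 + B(o_0-u_0)/(b^{k-1}u_{k-1})\bigr)$. A direct calculation mirroring the derivation of $p_n^*$ in Section~\ref{Sec:pure} represents the dual kernel as a $\lambda$-mixture of pure-Gamma kernels,
\[
P_k^* f(x) = \int P_\lambda f(x)\,\tilde W_x(d\lambda),\qquad \tilde W_x(d\lambda) \propto \lambda^\alpha e^{-\lambda x/b}\,U_{k-1}(d\lambda),
\]
with $P_\lambda f(x) = E[f(x/b + Z_\lambda)]$, $Z_\lambda\sim\Gamma(\lambda,\beta)$. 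Using the identity $\partial_x \tilde W_x = -b^{-1}(\lambda - \bar\lambda_x)\tilde W_x$ where $\bar\lambda_x = E_{\tilde W_x}[\lambda]$, differentiation in $x$ gives the decomposition
\[
\tfrac{d}{dx} P_k^* f(x) = \int \partial_x P_\lambda f(x)\,\tilde W_x(d\lambda) - b^{-1}\mathrm{Cov}_{\tilde W_x}\bigl(P_\lambda f(x),\lambda\bigr),
\]
whose first summand is bounded by $b^{-1}||f||_{Lip}$. The covariance I would estimate by Cauchy--Schwarz combined with the shared-$S$ coupling $Z_\lambda = S/\lambda$, $S\sim\Gamma(1,\beta)$, yielding $|P_\lambda f(x) - P_{\lambda'}f(x)| \le \beta||f||_{Lip}|\lambda-\lambda'|/(\lambda\lambda')$ together with the moments $E(S^2) = \beta(\beta+1)$ and $E(S)^2 = \beta^2$; evaluating the resulting second-moment expression at the extremes $u_0$ and $o_0$ produces exactly the three-term combination $\beta(\beta+1) - 2\beta^2 u_0/o_0 + \beta(\beta+1)o_0^2/u_0^2$ under the square root defining $B$. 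Popoviciu's bound $\mathrm{Var}_{\tilde W_x}(\lambda)\le(o_{k-1}-u_{k-1})^2/4$ together with the comparison $o_k/u_k\le o_0/u_0$ then converts the naturally quadratic factor $(o_{k-1}-u_{k-1})^2/u_{k-1}^2$ into the advertised linear factor $(o_0-u_0)/(b^{k-1}u_{k-1})$ with a $k$-independent constant.

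Plugging these bounds into Proposition~\ref{Prop:Stannat} gives~\eqref{Eq:bound2}. The main obstacle I anticipate is obtaining $B$ in its stated form: a naive Lipschitz-in-$\lambda$ estimate with constant $\beta/u^2$ for $\lambda \mapsto P_\lambda f(x)$ would yield a looser coefficient, so careful use of the shared-$S$ coupling and of the monotonicity $o_k/u_k \le o_0/u_0$ is essential to end up with a constant depending only on $u_0$, $o_0$, and $\beta$.
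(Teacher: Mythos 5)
Your proposal follows the same skeleton as the paper's proof: conjugacy of the mixed-Gamma family with supports $[u_n,o_n]$ evolving by $u_n=u_{n-1}/b+y_n$ (Lemma~\ref{conj} and Remark~\ref{rem1}), the bound $\hat\sigma_n\le\sqrt{2q(q+1)}/u_n$, and a per-step contraction estimate obtained by writing $P_k^*$ as a $\lambda$-mixture of shifted Gamma kernels and splitting the increment of $P_k^*f$ into a translation part (contributing $b^{-1}\|f\|_{Lip}$) plus a covariance term in $\lambda$ under $U_{k-1}^x$ --- this is precisely the $T_1+T_2$ decomposition of Proposition~\ref{contraction}, with your $\tilde W_x$ equal to the paper's $U_{k-1}^x$. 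The one substantive deviation is how the covariance term is estimated. The paper symmetrizes it as a double integral against $\bar U_{k-1}^x\otimes\bar U_{k-1}^x$, bounds the inner $y$-integral $\iint|y_1-y_2|\,d\Gamma(\lambda_1,\beta)\,d\Gamma(\lambda_2,\beta)$ by the three-term expression \eqref{Eq:Formel} (independent coupling plus Jensen, \emph{not} a shared-$S$ coupling), and retains a single factor $|\lambda_1-\lambda_2|/\lambda_1\le o_{k-1}/u_{k-1}-1$; this is linear in the spread from the outset and yields the stated $B$ exactly. Your Cauchy--Schwarz/Popoviciu route produces the quadratic quantity $(o_{k-1}-u_{k-1})^2/u_{k-1}^2$, which you convert to the linear factor at the price of an extra $(o_0-u_0)/u_0$; the resulting constant is not literally $B$ (one can check it is dominated by $B$, so the inequality \eqref{Eq:bound2} still follows, indeed in slightly sharpened form if you track constants carefully). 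Be aware also of an internal inconsistency in your sketch: the shared-$S$ coupling gives $|P_{\lambda}f-P_{\lambda'}f|\le\beta\|f\|_{Lip}|\lambda-\lambda'|/(\lambda\lambda')$, whereas the combination $\beta(\beta+1)-2\beta^2u_0/o_0+\beta(\beta+1)o_0^2/u_0^2$ arises only from the independent coupling via \eqref{Eq:Formel}; you cannot obtain both from the same coupling, so you must commit to one and propagate its constant consistently through the final bound.
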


This estimate induces stability with geometric rates:

\begin{theorem}
\label{Thm:stability}
Under the assumptions of Theorem \ref{stability} the optimal filter is stable for almost all sequences $y_1, y_2, \ldots$ of observations and we have
\begin{equation}
\label{Eq:stab}
|| \pi_n^{y_{1:n}}-\tilde \pi_n^{y_{1:n}}||_{var} \;=\; \mathcal O (\delta^{-n}), \qquad n \to \infty,
\end{equation}
for every $\delta<E(W_1^\beta)^{-\frac1\beta}$.
\end{theorem}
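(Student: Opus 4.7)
The plan is to combine the deterministic bound \eqref{Eq:bound2} of Theorem \ref{stability} with the almost sure lower bound on $b^jY_j$ provided by Lemma \ref{div}. First, I identify $u_n$ (and $o_n$) as the left (resp.\ right) endpoint of the support of the posterior mixing measure after $n$ observations; the same recursion as in the pure Gamma case yields $u_{n+1}=u_n/b+y_{n+1}$ (and analogously for $o_n$), hence
\[
b^n u_n \;=\; u_0+\sum_{j=1}^n b^jy_j,\qquad b^n(o_n-u_n) \;=\; o_0-u_0.
\]
With this, the right hand side of \eqref{Eq:bound2} becomes
\[
\frac{Q\,||h_0||_{Lip}}{u_0+\sum_{j=1}^n b^jy_j}\prod_{k=0}^{n-1}\left(1+\frac{B(o_0-u_0)}{u_0+\sum_{j=1}^k b^jy_j}\right),
\]
so the behaviour of both the prefactor and the product is controlled by the single random sequence $b^ku_k$.

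Next, I fix any $\delta_0\in(1,E(W_1^\beta)^{-1/\beta})$, an interval that is nonempty since $W_1\in(0,1)$ a.s.\ forces $E(W_1^\beta)<1$. By Lemma \ref{div} there is a full measure set $\Omega_{\delta_0}$ on which $b^jY_j>\delta_0^j$ eventually; on $\Omega_{\delta_0}$ this gives $b^nu_n\ge c(\omega)\,\delta_0^n$ for $n$ large. A single geometric lower bound then does all the work: it yields $\sum_{k\ge 0}1/(b^ku_k)<\infty$, whence the infinite product converges to a random but finite limit $\Pi(\omega)$, and the prefactor $1/(b^nu_n)$ supplies the decay. Thus $||\pi_n^{y_{1:n}}-\tilde\pi_n^{y_{1:n}}||_{var}=\mathcal O(\delta_0^{-n})$ on $\Omega_{\delta_0}$.

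To upgrade to the statement for every $\delta<E(W_1^\beta)^{-1/\beta}$ on a single full measure set, I take a sequence $\delta_0^{(m)}\uparrow E(W_1^\beta)^{-1/\beta}$ and work on $\Omega^*=\bigcap_m\Omega_{\delta_0^{(m)}}$, still of full measure. Given $\delta<E(W_1^\beta)^{-1/\beta}$, the assertion is trivial if $\delta\le 1$; if $\delta>1$ I pick $m$ with $\delta_0^{(m)}>\delta$, and the inequality $(\delta_0^{(m)})^{-n}\le\delta^{-n}$ converts the established $\mathcal O((\delta_0^{(m)})^{-n})$ bound into $\mathcal O(\delta^{-n})$.

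The only nonroutine point is recognising that $b^n(o_n-u_n)$ is conserved along the recursion, which turns each factor of the product into a perturbation of order $1/(b^ku_k)$; once this is in hand, the summability supplied by Lemma \ref{div} together with the elementary implication $\sum a_n<\infty\Rightarrow\prod(1+a_n)<\infty$ makes the argument collapse onto a single application of that lemma, and the passage from $\delta_0$ to arbitrary $\delta$ is a standard countable-intersection trick.
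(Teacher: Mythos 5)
Your proposal is correct and follows essentially the same route as the paper: both combine the bound \eqref{Eq:bound2} with the identity $b^ku_k=u_0+\sum_{j=1}^k b^jy_j$ and the eventual lower bound $b^jY_j>\delta_0^j$ from Lemma \ref{div}, so that $\sum_k (b^ku_k)^{-1}<\infty$ keeps the product bounded (the paper makes this explicit via $1+x\le e^x$) while the prefactor $(b^nu_n)^{-1}$ delivers the geometric rate. Your explicit countable-intersection step to obtain a single full-measure set for all $\delta$ is a small tidying-up of a point the paper leaves implicit, not a different argument.
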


\subsubsection*{Proofs and preliminary results}
Before proving the above main statements we consider two preliminary results. Firstly note that posterior distributions corresponding to an initial condition of the form \eqref{IniCond} are again of mixed Gamma type:

\begin{lemma}
\label{conj}
The posterior distributions $\pi_n^{y_{1:n}}$ are mixed Gamma-distributions with densities
\[\pi_n^{y_{1:n}}(x) \propto x^{q-1} \int_0^\infty \lambda^q e^{-\lambda x} d U_n(\lambda), \qquad x>0.\]
Hereby, the distributions $U_n$, $n \in \N$, obey the following recursive scheme:
\begin{itemize}
\item $U_n^{(1)}(d\lambda)= \lambda^\alpha U_n(d\lambda)$,
\item $U_n^{(2)}(A) = U_n^{(1)}(b \cdot A)$ for $A \in \mathcal B(\R_+)$,
\item $U_n^{(3)}= \delta_{y_{n+1}} \star U_n^{(2)}$, where $\delta_{y_{n+1}}$ denotes the Dirac measure in $y_{n+1}$ and $\star$ the convolution,
\item $U_{n+1}(d\lambda)= \lambda^{-q} U_n^{(3)}(d\lambda)$.
\end{itemize}
\end{lemma}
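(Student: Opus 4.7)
The proof naturally proceeds by induction on $n$. The base case $n=0$ is given by assumption on $\pi_0$. For the induction step, assume that $\pi_n^{y_{1:n}}(x) \propto x^{q-1}\int_0^\infty \lambda^q e^{-\lambda x}\, dU_n(\lambda)$, and plug this into the filtering recursion $\pi_{n+1}^{y_{1:n+1}}(x) \propto g(x, y_{n+1}) \int p(s,x)\pi_n^{y_{1:n}}(ds)$, with $g(x,y) = \frac{x^\beta}{\Gamma(\beta)} y^{\beta-1} e^{-xy}$ and $p$ as displayed at the end of Section~1. The key computation is the predictive step $\int p(s,x)\pi_n^{y_{1:n}}(s)\,ds$: after interchanging the $s$- and $\lambda$-integrals, the factor $s^{q-1}$ cancels the $(bs)^{1-q}$ in $p$, and the substitution $u = bs - x$ reduces the inner integral to a Gamma integral $\int_0^\infty u^{\beta-1} e^{-\lambda u/b}du = \Gamma(\beta)(b/\lambda)^\beta$.

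What remains, up to constants, is
\[
x^{\alpha-1}\int_0^\infty \lambda^{\alpha} e^{-\lambda x/b}\, dU_n(\lambda),
\]
and this is precisely where the first two steps of the recursive scheme enter: the weighting by $\lambda^\alpha$ realises $U_n^{(1)}(d\lambda) = \lambda^\alpha U_n(d\lambda)$, and then the change of variable $\mu = \lambda/b$ under the integral rewrites $e^{-\lambda x/b}$ as $e^{-\mu x}$ against the pushforward $U_n^{(2)}(A) = U_n^{(1)}(bA)$. This yields the prior density of $X_{n+1}$ given $Y_{1:n}$ as $x^{\alpha-1}\int e^{-\mu x}\, dU_n^{(2)}(\mu)$.

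Multiplying by $g(x,y_{n+1}) \propto x^\beta e^{-x y_{n+1}}$ gives $x^{\alpha+\beta-1}\int e^{-(\mu+y_{n+1})x}\, dU_n^{(2)}(\mu)$. Since $\alpha + \beta = q$ and the shifted measure $\int f(\mu + y_{n+1})dU_n^{(2)}(\mu)$ is by definition $\int f\, d(\delta_{y_{n+1}} \star U_n^{(2)}) = \int f\, dU_n^{(3)}$, we obtain $\pi_{n+1}^{y_{1:n+1}}(x) \propto x^{q-1}\int e^{-\lambda x}\,dU_n^{(3)}(\lambda)$. Reintroducing the required $\lambda^q$ factor under the integral defines $U_{n+1}(d\lambda) := \lambda^{-q}\,U_n^{(3)}(d\lambda)$, the fourth step of the scheme, and completes the inductive step.

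The calculation is essentially mechanical; the only care needed is in Fubini (justified because all integrands are nonnegative), in tracking powers of $b$ and of $\lambda$ through the substitutions, and in recognising after each algebraic manipulation which of the four bullet-point operations on the measure $U_n$ it encodes. A minor point worth mentioning is that $U_{n+1}$ need not be a probability measure, but this is harmless since the density $\pi_{n+1}^{y_{1:n+1}}$ is defined only up to normalisation, and $U_{n+1}$ has compact support in $(0,\infty)$ whenever $U_n$ does, by the structure of the three intermediate operations.
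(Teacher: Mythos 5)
Your proof is correct and follows essentially the same route as the paper's: induction via the filtering recursion, cancellation of $(bs)^{1-q}$ against $s^{q-1}$, the substitution $u=bs-x$ reducing the prediction integral to $\Gamma(\beta)(b/\lambda)^\beta e^{-\lambda x/b}$, and then reading off the four measure operations in order. Your closing remark about $U_{n+1}$ not being normalised and its support is a harmless refinement that the paper handles separately in Remark 6.
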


\begin{remark}
\label{rem1}
For $U_0=\delta_{\lambda}$ with some $\lambda>0$ we obtain $U_n=\delta_{b^{-n} \lambda+\sum_{k=1}^n b^{k-n}y_k}$. This setup corresponds to the pure Gamma case of Section \ref{Sec:pure}.

If $U_0$ has compact support $[u_0,o_0]$ then $U_n$ has also a compact support $[u_n,o_n]$ with
$u_n=b^{-n} u_0+\sum_{k=1}^n b^{k-n} y_k$ and $o_n=b^{-n} o_0+\sum_{k=1}^n b^{k-n} y_k$, $n \in \N$.
\end{remark}

\begin{proof}
The recursion \eqref{Eq:posterior} implies
\begin{eqnarray*}
\lefteqn{\pi_{n+1}^{y_{1:n+1}}(dx)}\\
&\propto&
x^\beta e^{-x y_{n+1}} \int_{b^{-1}x}^\infty (bs)^{1-q} x^{\alpha-1} (bs-x)^{\beta-1} \cdot s^{q-1} \int_0^\infty \lambda^q e^{-\lambda s} dU_n(\lambda) ds\\
&\propto&
x^{q-1} e^{-x y_{n+1}} \int_0^\infty \lambda^\alpha e^{-\lambda b^{-1}x} dU_n(\lambda)\\
&=&
x^{q-1} e^{-x y_{n+1}} \int_0^\infty e^{-\lambda b^{-1}x} dU_n^{(1)}(\lambda)
\;=\;
x^{q-1} e^{-x y_{n+1}} \int_0^\infty e^{-\lambda x} dU_n^{(2)}(\lambda)\\
&=&
x^{q-1} \int_0^\infty e^{-\lambda x} dU_n^{(3)}(\lambda)
\;=\;
 x^{q-1} \int_0^\infty \lambda^q e^{-\lambda x} d U_{n+1}(\lambda), \qquad x>0.
\end{eqnarray*}

\end{proof}

In order to apply Prop. \ref{Prop:Stannat} we deduce an upper bound for the contraction coefficient $\chi_n^*$ of the ground state transform $P_n^*$:

\begin{proposition}
\label{contraction}
Let $f:\R_+\to \R$ be Lipschitz-continuous.
Under the assumptions of Thm. \ref{stability} we find
\[||P_n^*f||_{Lip} \le  \frac{\frac12\sup_{x> 0} d_n(x)+1}{b} \,||f||_{Lip}\]
with
\[
d_n(x)
\;=\;
\int\limits_0^\infty\!\! \int\limits_0^\infty
|\lambda_1 -\lambda_2|
\left(
\frac{\beta(\beta+1)}{\lambda_1^2}
-\frac{2\beta^2}{\lambda_1 \lambda_2}
+\frac{\beta(\beta+1)}{\lambda_2^2}
\right)^\frac12
dU_{n-1}^x(\lambda_1) dU_{n-1}^x(\lambda_2)
\]
and
\[d  U_{n-1}^x(\lambda)
=
\frac{\lambda^\alpha e^{-\lambda b^{-1}x}dU_{n-1}(\lambda)}
{\int\lambda^\alpha e^{-\lambda b^{-1}x} dU_{n-1}(\lambda)}, \qquad n \in \N, \quad x > 0.
\]
Moreover
\[\sup_{x>0}d_n(x)\;\le\; \left(\frac{o_{n-1}}{u_{n-1}}-1\right)
\left(
\beta(\beta+1)
-\frac{2\beta^2 u_0}{o_0}
+\frac{\beta(\beta+1)o_0^2}{u_0^2}
\right)^\frac12, \qquad n\in \N.
\]

\end{proposition}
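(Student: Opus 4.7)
The plan is to first derive an explicit mixture representation of $P_n^*$, and then estimate its Lipschitz constant by direct differentiation, isolating the $x$-dependence coming from the shifted-Gamma integrand and the one coming from the tilted mixing measure.

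Substituting the mixed Gamma form of $\pi_{n-1}^{y_{1:n-1}}$ from Lemma \ref{conj} into the standard expression $p_n^*(x,y) \propto p(y,x)\pi_{n-1}^{y_{1:n-1}}(y)$ and changing variables via $u = y - x/b$ factorises the result in $\lambda$ and $u$, leading to
\[
P_n^* f(x) \;=\; \int_0^\infty G(x,\lambda)\,dU_{n-1}^x(\lambda), \qquad G(x,\lambda) := E[f(x/b + Z_\lambda)],
\]
where $Z_\lambda \sim \Gamma(\lambda,\beta)$. The pure-Gamma kernel of Section \ref{Sec:pure} is recovered fibrewise in $\lambda$, and the tilting factor $\lambda^\alpha e^{-\lambda x/b}$ from Lemma \ref{conj} is precisely what produces $U_{n-1}^x$.

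Next I differentiate in $x$: by the product rule the derivative splits into $\int \partial_x G\,dU_{n-1}^x + \int G\,\partial_x\rho_x\,dU_{n-1}$, where $\rho_x := dU_{n-1}^x/dU_{n-1}$. The first term is bounded by $b^{-1}||f||_{Lip}$, since $G(\cdot,\lambda)$ is $b^{-1}$-Lipschitz uniformly in $\lambda$. For the second, a short calculation gives $\partial_x\log\rho_x(\lambda) = b^{-1}(\bar\lambda_x - \lambda)$ with $\bar\lambda_x := \int \lambda'\,dU_{n-1}^x(\lambda')$, so this contribution equals $-b^{-1}$ times the covariance of $G(x,\cdot)$ and $\lambda$ under $U_{n-1}^x$. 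Applying the standard symmetrisation identity which turns a covariance into $\tfrac12\iint(\varphi_1-\varphi_2)(\psi_1-\psi_2)\,dU^x\,dU^x$, and bounding $|G(x,\lambda_1)-G(x,\lambda_2)| \le ||f||_{Lip}\sqrt{E(Z_{\lambda_1}-Z_{\lambda_2})^2}$ by independent coupling of $Z_{\lambda_1},Z_{\lambda_2}$ followed by Cauchy-Schwarz, yields the second contribution as at most $(2b)^{-1}||f||_{Lip}\,d_n(x)$: for independent $\Gamma(\lambda_i,\beta)$ variables the second moment of the difference is exactly the quantity under the square root in $d_n(x)$, using $E Z_\lambda = \beta/\lambda$ and $E Z_\lambda^2 = \beta(\beta+1)/\lambda^2$.

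For the supremum estimate I would exploit that $U_{n-1}^x$ has support $[u_{n-1},o_{n-1}]$ independent of $x$, so that $d_n(x) \le (o_{n-1}-u_{n-1})\sup_{\lambda_1,\lambda_2 \in [u_{n-1},o_{n-1}]}\sqrt{A(\lambda_1,\lambda_2)}$ with $A$ denoting the square of the integrand. Setting $r_i := u_{n-1}/\lambda_i \in [u_{n-1}/o_{n-1},1]$, the quantity $u_{n-1}^2 A = \beta(\beta+1)(r_1^2+r_2^2) - 2\beta^2 r_1 r_2$ is a quadratic form with no interior critical point on the square, so its maximum is attained at a corner; a short case analysis identifies $(1, u_{n-1}/o_{n-1})$ as the maximiser, with value $\beta(\beta+1)(1+(u_{n-1}/o_{n-1})^2) - 2\beta^2(u_{n-1}/o_{n-1})$. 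To make this $n$-independent I use the monotonicity $u_n/o_n \ge u_0/o_0$, which follows from the explicit formulae in Remark \ref{rem1} and the elementary fact that adding equal non-negative constants to the numerator and denominator of a sub-unit ratio brings it closer to one; combined with $u_{n-1}/o_{n-1}\le 1\le o_0/u_0$ this upgrades the corner value to $\beta(\beta+1)(1+(o_0/u_0)^2) - 2\beta^2 u_0/o_0$, and dividing by $u_{n-1}$ produces the factor $o_{n-1}/u_{n-1}-1$. This corner analysis together with the monotonicity step is the main delicate point, since it is what decouples the bound from the observation sequence $y_1,y_2,\ldots$ and explains why the initial spread $o_0/u_0$ alone controls the rate.
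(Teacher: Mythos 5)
Your argument is essentially the paper's proof in derivative form: the paper splits the increment $P_n^*f(x_1)-P_n^*f(x_2)$ into a term $T_1$ accounting for the shift of $f$'s argument and a term $T_2$ accounting for the $x$-dependence of the kernel, writes $T_2$ by the fundamental theorem of calculus as an integral over $x$ of exactly the covariance you obtain from $\partial_x\log\rho_x=b^{-1}(\bar\lambda_x-\lambda)$, symmetrises it, and invokes the identical independent-Gamma coupling with Jensen's inequality (formula \eqref{Eq:Formel}); your first and second contributions are the paper's $T_1$ and $T_3$. (Since $f$ is only Lipschitz, the clean way to run your product-rule step is to keep the $\partial_xG$ part as a finite difference, as the paper does; this is cosmetic.)

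One substantive slip in your supremum estimate: the convex quadratic $Q(r_1,r_2)=\beta(\beta+1)(r_1^2+r_2^2)-2\beta^2r_1r_2$ on the square $[c,1]^2$ with $c=u_{n-1}/o_{n-1}$ is \emph{not} always maximised at $(1,c)$. Comparing corners, $Q(1,c)-Q(1,1)=\beta\left((\beta+1)c^2-2\beta c+\beta-1\right)$ has roots $c=1$ and $c=\frac{\beta-1}{\beta+1}$, so for $c\in\left(\frac{\beta-1}{\beta+1},1\right)$ the corner $(1,1)$ wins with value $2\beta$. Your final bound nevertheless survives, because the $n$-independent quantity $\beta(\beta+1)\left(1+(o_0/u_0)^2\right)-2\beta^2u_0/o_0$ dominates \emph{every} corner value: it is at least $2\beta$ (it equals $2\beta$ at $o_0/u_0=1$ and is increasing in $o_0/u_0$) and it dominates $Q(1,c)$ termwise since $c\le 1\le o_0/u_0$ and $c\ge u_0/o_0$. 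So you should either justify the bound by ``all corners are dominated'' rather than by identifying a unique maximiser, or do as the paper does and skip the case analysis entirely: factor $1/\lambda_1$ out of the square root, bound $|\lambda_1-\lambda_2|/\lambda_1\le o_{n-1}/u_{n-1}-1$, and estimate the remaining bracket term by term using $u_0/o_0\le\lambda_1/\lambda_2\le o_0/u_0$.
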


\paragraph{Notations:}
To simplify notations in the following proof we introduce for $x,y>0$ and $n \in \N$ the following measures derived from $U_n$:
\begin{eqnarray*}
dU_n^x(\lambda)
&=&
 \frac{\lambda^\alpha e^{-\lambda b^{-1}x } dU_n(\lambda) }{\int_0^\infty \lambda^\alpha e^{-\lambda b^{-1}x } dU_n(\lambda) },\\
dU_n^{x,y}(\lambda)
&=&
\frac{\lambda^\beta}{\Gamma(\beta)}y^{\beta-1}e^{-\lambda y} dU_n^x(\lambda),\\
d\bar U_n(\lambda,y)
&=&
dU_n^{x,y}(\lambda)dy.
\end{eqnarray*}

Moreover, note the following estimate via Jensen's inequality
\begin{eqnarray}
\lefteqn{\int\limits_0^\infty\!\! \int\limits_0^\infty
|y_1-y_2| d\Gamma(\lambda_1,\beta)(y_1) d\Gamma(\lambda_2,\beta) (y_2)}\nonumber\\
&\le&
\left(
\int\limits_0^\infty\!\! \int\limits_0^\infty
(y_1-y_2)^2 d\Gamma(\lambda_1,\beta)(y_1) d\Gamma(\lambda_2,\beta) (y_2)
\right)^\frac12 \nonumber\\
&=&
\left(
\frac{\beta(\beta+1)}{\lambda_1^2}
-\frac{2\beta^2}{\lambda_1 \lambda_2}
+\frac{\beta(\beta+1)}{\lambda_2^2}
\right)^\frac12.
\label{Eq:Formel}
\end{eqnarray}

\begin{proof}
Let $n \in \N$.
Due to the structure of the optimal filter we obtain for $x,y>0$
\begin{eqnarray*}
p_n^*\left(x,y\right)
&=&
\frac{
\int_0^\infty e^{-\lambda y} (y-b^{-1}x)^{\beta-1}\lambda^q dU_{n-1}(\lambda) }
{\Gamma(\beta) \int_0^\infty \lambda^\alpha e^{-\lambda b^{-1}x}dU_{n-1}(\lambda) }
\I_{(b^{-1}x,\infty)}(y).
\end{eqnarray*}

For $x_1, x_2 >0$ we find
\begin{eqnarray*}
\lefteqn{P_n^*f(x_1)-P_n^*f(x_2)}\\
&=&
\underbrace{
\int_0^\infty
\left[f\left(y+\frac{x_1}b\right) -f\left(y+\frac{x_2}b\right)\right] p_n^*\left(x_2,y+\frac{x_2}b\right) dy}_{=:T_1}\\
&&+
\underbrace{
\int_0^\infty
f\left(y+\frac{x_2}b\right)
\left[p_n^*\left(x_1,y+\frac{x_1}b\right)- p_n^*\left(x_2,y+\frac{x_2}b\right)\right] dy}_{=:T_2}.
\end{eqnarray*}

As $x \mapsto p_n^*(x,y+b^{-1}x)$ is differentiable with
\begin{eqnarray*}
\lefteqn{\frac{d}{dx} p_n^*\left(x,y+\frac{x}b\right)}\\
&=&
\frac{
\int_0^\infty -\lambda b^{-1} e^{-\lambda(y +b^{-1}x)}y^{\beta-1}\lambda^q dU_{n-1}(\lambda) }
{\int_0^\infty \lambda^\alpha e^{-\lambda b^{-1}x}dU_{n-1}(\lambda) \Gamma(\beta)}\\
&&
+
\frac{
\int_0^\infty e^{-\lambda(y +b^{-1}x)}y^{\beta-1}\lambda^q dU_{n-1}(\lambda) \int_0^\infty \lambda b^{-1} \lambda^\alpha e^{-\lambda b^{-1}x}dU_{n-1}(\lambda)  }
{\left( \int_0^\infty \lambda^\alpha e^{-\lambda b^{-1}x}dU_{n-1}(\lambda)\right)^2 \Gamma(\beta) }\\
&=&
-\int_0^\infty \frac\lambda b dU_{n-1}^{x,y}(\lambda)
+
\int_0^\infty dU_{n-1}^{x,y}(\lambda)
\cdot
\int_0^\infty \!\!\int_0^\infty \frac\lambda b
dU_{n-1}^{x,y}(\lambda) dy,
\end{eqnarray*}

for $T_2$ we find
\begin{eqnarray*}
T_2
&=&
\int_0^\infty
f\left(y+\frac{x_2}b\right)
\int_{x_1}^{x_2} \frac{d}{dx} p_n^*\left(x,y+\frac{x}b\right) dx dy\\
&=&
\int_{x_1}^{x_2}
\underbrace{
\int_0^\infty
f\left(y+\frac{x_2}b\right)
\frac{d}{dx} p_n^*\left(x,y+\frac{x}b\right)dy}_{=:T_3}
 dx.
\end{eqnarray*}

Further we have
\begin{eqnarray*}
T_3
&=&
\int\limits_0^\infty \!\! \int\limits_0^\infty
f\left(y+\frac {x_2}b\right) \frac\lambda b d\bar U_{n-1}^x(\lambda,y)\\
&&
-
\int\limits_0^\infty \!\! \int\limits_0^\infty
f\left(y+\frac {x_2}b\right)
d\bar U_{n-1}^x(\lambda,y)
\cdot
\int\limits_0^\infty \!\! \int\limits_0^\infty \frac\lambda b
d\bar U_{n-1}^x(\lambda,y)\\
&=&
\frac12
\int\limits_0^\infty\!\! \int\limits_0^\infty\!\! \int\limits_0^\infty\!\! \int\limits_0^\infty
\left(f\left(y_1+\frac {x_2}b\right)-f\left(y_2+\frac {x_2}b\right)\right)\left( \frac{\lambda_1} b -\frac{\lambda_2} b \right)\\
&&\hspace{5cm}
d\bar U_{n-1}^x(\lambda_1,y_1) d\bar U_{n-1}^x(\lambda_2,y_2),
\end{eqnarray*}
since $\bar U_{n-1}^x$ is a probability measure on $[0,\infty)^2$.
Therefore,
\begin{eqnarray*}
|T_3|
&\le&
\frac{||f||_{Lip}}{2 b}
\int\limits_0^\infty\!\! \int\limits_0^\infty\!\! \int\limits_0^\infty\!\! \int\limits_0^\infty
|y_1-y_2| \cdot |\lambda_1 -\lambda_2|\,
d\bar U_{n-1}^x(\lambda_1,y_1) d\bar U_{n-1}^x(\lambda_2,y_2)\\
&=&
\frac{||f||_{Lip}}{2 b}
\int\limits_0^\infty\!\! \int\limits_0^\infty\!\!
\int\limits_0^\infty\!\! \int\limits_0^\infty
|y_1-y_2| \cdot
\frac{(\lambda_1 \lambda_2)^\beta}{\Gamma(\beta)^2}(y_1 y_2)^{\beta-1}e^{-\lambda_1 y_1-\lambda_2 y_2} dy_1 dy_2\\
&& \hspace{5cm} \cdot
|\lambda_1 -\lambda_2|\,
dU_{n-1}^x(\lambda_1) dU_{n-1}^x(\lambda_2)
\end{eqnarray*}
and formula \eqref{Eq:Formel} yields

\begin{eqnarray*}
|T_3|
&\le&
\frac{||f||_{Lip}}{2 b}
\int\limits_0^\infty\!\! \int\limits_0^\infty
\frac{|\lambda_1 -\lambda_2|}{\lambda_1}
\left(
\beta(\beta+1)
-\frac{2\beta^2\lambda_1}{\lambda_2}
+\frac{\beta(\beta+1)\lambda_1^2}{\lambda_2^2}
\right)^\frac12\\
&&
\hspace{7cm}
dU_{n-1}^x(\lambda_1) dU_{n-1}^x(\lambda_2)\\
&\le&
\frac{||f||_{Lip}}{2 b}
\left(\frac{o_{n-1}}{u_{n-1}}-1\right)
\underbrace{
\left(
\beta(\beta+1)
-\frac{2\beta^2 u_0}{o_0}
+\frac{\beta(\beta+1)o_0^2}{u_0^2}
\right)^\frac12
}_{=:2 B}
\end{eqnarray*}
since
$1 \le \frac{o_n}{u_n} \le \frac{o_0}{u_0}$
and $\frac{o_n}{u_n} \downarrow 1$ due to Lemma \ref{div}.
For $T_2$ we finally have
\begin{eqnarray*}
|T_2| &\le&
|x_1-x_2| \frac{||f||_{Lip}}{b}
\left(\frac{o_{n-1}}{u_{n-1}}-1\right)
B
\end{eqnarray*}
which together with the obvious estimate
$|T_1| \le \frac{||f||_{Lip}}{b}\cdot |x_1-x_2|$
yields
\begin{eqnarray*}
||P_n f||_{Lip}
&\le&
\frac{1+\left(\frac{o_{n-1}}{u_{n-1}}-1\right)B}{b}
||f||_{Lip}\,.
\end{eqnarray*}

\end{proof}

Now we can proceed with the proofs of Theorems \ref{stability} and \ref{Thm:stability}:

\begin{proof}
of Thm. \ref{stability}:
We apply Prop.\ \ref{Prop:Stannat}.
Due to Prop.\ \ref{contraction} we have
\begin{eqnarray*}
\chi_k^*
&\le&
b^{-1} \left(1+B\frac{o_{k-1}-u_{k-1}}{u_{k-1}} \right)
\;=\;
b^{-1} \left(1+B \frac{o_0-u_0}{b^{k-1}u_{k-1}} \right).
\end{eqnarray*}

For $\hat \sigma_n$ we find
\begin{eqnarray*}
\hat \sigma_n
&=&
%
\int\limits_0^\infty \!\! \int\limits_0^\infty \!\! \int\limits_0^\infty \!\! \int\limits_0^\infty
|x_1-x_2| d\Gamma(\lambda_1,q)(x_1)d\Gamma(\lambda_2,q)(x_2)
dU_n(\lambda_1)dU_n(\lambda_2)\\
&\le&
\left(
\int\limits_0^\infty \!\! \int\limits_0^\infty
\frac{q(q+1)}{\lambda_1^2}-2\frac{q^2}{\lambda_1 \lambda_2}+\frac{q(q+1)}{\lambda_2^2}
dU_n(\lambda_1)dU_n(\lambda_2)
\right)^\frac12\\
&\le&
%
u_n^{-1}
(2 q(q+1))^\frac12
\end{eqnarray*}
due to \eqref{Eq:Formel}. Altogether this yields \eqref{Eq:bound2} and proves the theorem.
\end{proof}

\begin{proof} of Thm. \ref{Thm:stability}:\\
We show that $(b^{n} u_n)^{-1} \prod_{k=0}^{n-1} \left(1 +B \frac{o_0-u_0}{b^{k}u_k} \right)$ vanishes almost surely if $n$ goes to infinity:

First note that
\begin{eqnarray*}
\prod_{k=0}^{n-1} \left(1 +B \frac{o_0-u_0}{b^{k}u_k} \right)
&\le&
\exp\left\{ B (o_0-u_0) \sum_{k=0}^{n-1} (b^{k} u_k)^{-1} \right\},
\end{eqnarray*}
since $1+x \le \exp x$ for $x \in \R$.

Remember that $b^ku_k=u_0+\sum_{j=1}^k b^j y_j$.
Due to Lemma \ref{div} we can find $\delta>1$ and a random index $J_\delta$ a.\,s.\ finite such that $Y_j>(\delta b^{-1})^j$ for all $j \ge J_\delta$. Consequently, there is a $j_\delta \in \N$ with
\begin{eqnarray}
\frac{
\prod_{k=0}^{n-1} \left(1 +B \frac{o_0-u_0}{b^{k}u_k} \right)
}{b^n u_n}
%
%
&\le&
\frac{\exp\left\{ \bar B \sum_{k=0}^{j_\delta-1} \frac1{u_0} + \bar B \sum_{k=j_\delta}^{n-1} \frac1{\delta^k } \right\}}
{u_0+\sum_{j=j_\delta}^n \delta^j},
\label{rechts}
\end{eqnarray}
where $\bar B=B(o_0-u_0)$.
The right hand side of \eqref{rechts} can be bounded for $n\ge j_\delta$ by
\begin{eqnarray}
\frac{\exp\left\{\frac{\bar B}{u_0} j_\delta\right\}
\exp \left\{\bar B \frac{\delta}{\delta-1} \right\}}
{\delta^n}
,
\label{Jestimate}
\end{eqnarray}
which tends to 0 as $\mathcal O (\delta^{-n})$ for $n \to \infty$.
\end{proof}

\section{Stability in the $L^p$-norm}
\label{Sec:mixed2}
In the setting of Theorem \ref{stability} the optimal filter is always stable in the $L^p$-norm for every $p>0$ since the total variation norm is bounded by 1 and so almost sure convergence of the optimal filter implies

\[E\left(|| \pi_n^{Y_{1:n}}-\tilde \pi_n^{Y_{1:n}}||_{var}^p\right) \to 0, \qquad n\to \infty,\]
by dominated convergence.

Moreover, for some values of $p$ we can specify the rates of this convergence:
\begin{theorem}
\label{Thm:Lp}
For $p \in \left(0,-\frac{u_0}{\bar B}\ln E( W_1^\beta ) \right)$ we find
\[E\left(|| \pi_n^{Y_{1:n}}-\tilde \pi_n^{Y_{1:n}}||_{var}^p\right) =\mathcal O(\rho^n), \qquad n\to \infty,\]
where
$\rho=\left(E ( W_1^\beta ) e^{\frac{p \bar B}{u_0}} \right)^{\frac p{p+\beta}}$ and $\bar B=\frac{o_0-u_0}2 \left(
\beta(\beta+1)
-\frac{2\beta^2 u_0}{o_0}
+\frac{\beta(\beta+1)o_0^2}{u_0^2}
\right)^\frac12
$.
\end{theorem}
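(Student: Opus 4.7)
The plan is to take the $p$-th moment of the pointwise bound from Theorem \ref{stability} and exploit the tail estimate from the proof of Lemma \ref{div}. Using $(1+x)^p \le e^{px}$ for $x,p\ge 0$, the stability bound of Theorem \ref{stability} gives pointwise
\[
|| \pi_n^{y_{1:n}}-\tilde \pi_n^{y_{1:n}}||_{var}^p \;\le\; C\,(b^n u_n)^{-p}\,e^{p\bar B S_n},
\]
where $S_n:=\sum_{k=0}^{n-1}(b^k u_k)^{-1}$, $\bar B=B(o_0-u_0)$, and $C$ collects the $p$-th powers of $Q$ and $||h_0||_{Lip}$. Since $b^k u_k\ge u_0$ for every $k$, one has $S_n\le n/u_0$, so taking expectations yields
\[
E\!\left[|| \pi_n^{Y_{1:n}}-\tilde \pi_n^{Y_{1:n}}||_{var}^p\right]\;\le\; C\,e^{p\bar B n/u_0}\,E\bigl[(b^n u_n)^{-p}\bigr].
\]

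To control the remaining moment I would truncate at level $\delta^n$ with a free parameter $\delta>0$. Since $b^n u_n\ge b^n Y_n$ (the summand $j=n$ in $b^n u_n = u_0+\sum_{j=1}^n b^j Y_j$), the computation from the proof of Lemma \ref{div} applies verbatim and gives
\[
P(b^n u_n<\delta^n)\;\le\; P(b^n Y_n<\delta^n)\;\le\; C_1\,(\delta^\beta E(W_1^\beta))^n.
\]
Splitting on this event, with $(b^n u_n)^{-p}\le u_0^{-p}$ there and $(b^n u_n)^{-p}\le\delta^{-np}$ on the complement,
\[
E\bigl[(b^n u_n)^{-p}\bigr]\;\le\;\delta^{-np}+u_0^{-p}C_1(\delta^\beta E(W_1^\beta))^n.
\]

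To reach the announced rate $\rho=(E(W_1^\beta)\,e^{p\bar B/u_0})^{p/(p+\beta)}$, the factor $e^{p\bar B n/u_0}$ and the truncation should be combined in a single joint optimization---via a H\"older-type argument---rather than separated in advance. The balancing choice is $\delta=(E(W_1^\beta)\,e^{p\bar B/u_0})^{-1/(p+\beta)}$, which is real and larger than one precisely when $E(W_1^\beta)\,e^{p\bar B/u_0}<1$, i.e.\ when $p<-(u_0/\bar B)\ln E(W_1^\beta)$; this recovers the admissible range in the theorem and guarantees $\rho<1$.

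The main obstacle is carrying out this final balance sharply. A naive separation---pulling the deterministic exponential $e^{p\bar B n/u_0}$ out of the expectation and then optimizing $\delta$ in the bound $\delta^{-np}+C_1(\delta^\beta E(W_1^\beta))^n$---yields only the strictly weaker rate $e^{p\bar B/u_0}\cdot E(W_1^\beta)^{p/(p+\beta)}$, which is smaller than one only under the more restrictive range $p<-(u_0/\bar B)\ln E(W_1^\beta)-\beta$. Recovering the sharp rate $\rho$ over the full admissible range requires absorbing the exponential penalty $e^{p\bar B/u_0}$ into the tail-probability optimization itself, so that both $E(W_1^\beta)$ and $e^{p\bar B/u_0}$ end up raised to the common fractional exponent $p/(p+\beta)$.
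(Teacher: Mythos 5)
Your setup is sound as far as it goes, and you have correctly diagnosed that the separated estimate is too weak: bounding $S_n=\sum_{k=0}^{n-1}(b^k u_k)^{-1}$ deterministically by $n/u_0$ and then truncating $b^n u_n$ at $\delta^n$ can only yield the rate $e^{p\bar B/u_0}E(W_1^\beta)^{p/(p+\beta)}$, which is below $1$ only on the restricted range $p<-\frac{u_0}{\bar B}\ln E(W_1^\beta)-\beta$. The problem is that your proposed repair --- a ``H\"older-type joint optimization'' --- is never carried out, and it cannot work in the form you describe: once $e^{p\bar B S_n}$ has been replaced by the deterministic factor $e^{p\bar B n/u_0}$, no choice of the truncation level $\delta$ recovers the lost exponent, and applying H\"older to $E[(b^n u_n)^{-p}e^{p\bar B S_n}]$ while only knowing $S_n\le n/u_0$ reproduces the same deterministic factor (and worsens the other term). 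The missing idea is that $S_n$ is in fact almost surely \emph{bounded}, not of order $n$, and that its size and the size of $b^n u_n$ are coupled through one and the same random index.

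The paper's proof uses $J_\delta=\inf\{k\in\N: b^jY_j>\delta^j \text{ for all } j\ge k\}$, whose tail satisfies $P(J_\delta>n)\le C q^n$ with $q=\delta^\beta E(W_1^\beta)$ (Lemma \ref{Lem:J}). On $\{J_\delta\le n\}$ one has simultaneously $b^n u_n\ge \delta^n$ and $S_n\le J_\delta/u_0+\frac{\delta}{\delta-1}$, so by \eqref{Jestimate} the $p$-th power of the total variation distance is bounded there by a constant times $e^{\frac{p\bar B}{u_0}J_\delta}\delta^{-pn}$; on $\{J_\delta>n\}$ one falls back on the crude bound $e^{\frac{p\bar B}{u_0}n}$. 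Taking expectations, the first term is $O(\delta^{-pn})$ provided $E\bigl(e^{\frac{p\bar B}{u_0}J_\delta}\bigr)<\infty$, which by the tail bound holds exactly when $e^{\frac{p\bar B}{u_0}}\delta^\beta E(W_1^\beta)<1$, while the second term is $e^{\frac{p\bar B}{u_0}n}q^n$. With $\delta=\bigl(E(W_1^\beta)e^{\frac{p\bar B}{u_0}}\bigr)^{-\frac1{p+\beta}}$ both contributions equal $\rho^n$, and the exponential-moment condition is precisely $\rho<1$, i.e.\ the stated range of $p$. It is this conditioning on $J_\delta$ --- turning the penalty $e^{p\bar B S_n}$ into a random variable with finite exponential moment rather than a factor $e^{p\bar B n/u_0}$ --- that places $e^{p\bar B/u_0}$ and $E(W_1^\beta)$ on the same footing with the common exponent $\frac p{p+\beta}$. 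Without this step your argument does not establish the theorem on the claimed range of $p$ or at the claimed rate.
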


Note that the theorem achieves lower rates than those in the pure Gamma case since
\begin{equation*}
\left(E ( W_1^\beta ) e^{\frac{p \bar B}{u_0}} \right)^{\frac p{p+\beta}}
\;\ge\;
E ( W_1^\beta )^{\frac p{p+\beta}}
\;\ge\;
E ( W_1^{\frac {p\beta}{p+\beta}})
\;\ge\;
E ( W_1^p ).
\end{equation*}

The proof is based on Theorem \ref{Thm:stability} and the bound \eqref{Jestimate} for the total variation distance. Consider first the following Lemma specifying the behavior of the random index $J_\delta$ introduced in the proof of Thm.\ \ref{Thm:stability}.

\begin{lemma}
\label{Lem:J}
Let $0<\delta<E(W_1^\beta)^{-\frac1\beta}$.
For
\[J_\delta \;=\; \inf\{k \in \N : b^jY_j>\delta^j \text{ for all } j \ge k\} \]
we find
\begin{equation}
\label{Eq:J}
P(J_\delta > n) \;\le\; \frac{E\left(X_0^\beta\right)}{\Gamma(\beta+1)(1-q)} q^{n}, \qquad n \in \N_0,
\end{equation}
where $q=\delta^\beta E (W_1^\beta)$.
\end{lemma}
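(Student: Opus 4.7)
The plan is to observe that the lemma is essentially a quantitative refinement of the Borel--Cantelli argument already used in the proof of Lemma \ref{div}, and that the same summable tail estimate \eqref{Eq:bound} produces the desired geometric decay by direct summation.

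First I would unfold the definition of $J_\delta$: the event $\{J_\delta > n\}$ says that $n$ is \emph{not} an admissible starting index, which means there exists some $j \ge n$ with $b^j Y_j \le \delta^j$. Consequently
\[
\{J_\delta > n\} \;\subseteq\; \bigcup_{j \ge n} \{b^j Y_j \le \delta^j\},
\]
and a union bound gives $P(J_\delta > n) \le \sum_{j \ge n} P(b^j Y_j \le \delta^j)$.

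Next I would plug in the per-term estimate from equation \eqref{Eq:bound} in the proof of Lemma \ref{div}, which yields
\[
P(b^j Y_j \le \delta^j) \;\le\; \frac{\delta^{j\beta}}{\Gamma(\beta+1)} \, E(W_1^\beta)^j \, E(X_0^\beta) \;=\; \frac{E(X_0^\beta)}{\Gamma(\beta+1)} \, q^j,
\]
where $q = \delta^\beta E(W_1^\beta) < 1$ thanks to the hypothesis $\delta < E(W_1^\beta)^{-1/\beta}$. Summing the resulting geometric series from $j=n$ to $\infty$ immediately produces the factor $q^n/(1-q)$ and hence the bound \eqref{Eq:J}.

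There is no real obstacle here; the only point to be careful about is the indexing convention (that $\{J_\delta > n\}$ corresponds to a failure index $j \ge n$ rather than $j \ge n+1$), which is precisely what is needed to match the constant $\frac{1}{1-q}$ stated in the lemma. Note that for small $n$ the right-hand side may exceed $1$, in which case the bound is trivial but still correct; the lemma becomes informative as soon as $n$ is large enough that $q^n E(X_0^\beta)/(\Gamma(\beta+1)(1-q)) \le 1$.
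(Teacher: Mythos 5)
Your proof is correct and follows essentially the same route as the paper: identify $\{J_\delta>n\}$ with $\bigcup_{j\ge n}\{b^jY_j\le\delta^j\}$, apply a union bound, insert the per-term estimate \eqref{Eq:bound} from the proof of Lemma \ref{div}, and sum the geometric series. The indexing remark and the observation that the bound may trivially exceed $1$ for small $n$ are both accurate.
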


\begin{proof}
Using \eqref{Eq:bound} we have for $n \in \N_0$
\begin{eqnarray*}
P(J_\delta > n)
\;=\;
P\left( \cup_{j=n}^\infty \{b^jY_j \le \delta^j\} \right)
&\le&
\sum_{j=n}^\infty P\left(b^jY_j \le \delta^j\right)\\
&\le&
\frac{E\left(X_0^\beta\right)}{\Gamma(\beta+1)} \cdot \frac{q^{n}}{1-q}.
\end{eqnarray*}
\end{proof}

\begin{proof} of Thm.\ \ref{Thm:Lp}:
First note that for $s \ge 0$
\[ E\left(e^{sJ_\delta}\right)=1+(e^s-1)\sum_{n=0}^\infty e^{sn}P(J_\delta> n).\]

Due to Lemma \ref{Lem:J} the expectation is finite if $e^s q<1$, where $q=\delta^\beta E (W_1^\beta)$.

Now let
$\delta=\left(E ( W_1^\beta ) e^{\frac{p \bar B}{u_0}} \right)^{-\frac1{p+\beta}}$.
Then $\delta$ satisfies
$1<\delta<E(W_1^\beta)^{-\frac1\beta}$. Now, Lemma \ref{Lem:J} applies and we obtain from \eqref{Jestimate} that
\begin{eqnarray*}
\lefteqn{E\left(|| \pi_n^{Y_{1:n}}-\tilde \pi_n^{Y_{1:n}}||_{var}^p\right)}\\
&\le&
Q ||h_0||_{Lip}
E \left(e^{p \bar B \frac\delta{\delta-1}} \frac{e^{\frac{p\bar B}{u_0}J_\delta}}{\delta^{pn}} \I_{\{J_\delta\le n\}}
+
e^{\frac{p\bar B}{u_0}n}\I_{\{J_\delta> n\}}
 \right)\\
&\le&
Q ||h_0||_{Lip} \left(
e^{p \bar B \frac\delta{\delta-1}} \frac{E\left(e^{\frac{p\bar B}{u_0}J_\delta}\right)}{\delta^{pn}}
+
e^{\frac{p\bar B}{u_0}n}P(J_\delta > n)
\right)\\
&\le&
Q ||h_0||_{Lip} \left(
e^{p \bar B \frac\delta{\delta-1}} \frac{E\left(e^{\frac{p\bar B}{u_0}J_\delta}\right)}{\delta^{pn}}
+
\frac{E\left(X_0^\beta\right)}{\Gamma(\beta+1)} e^{\frac{p\bar B}{u_0}n}q^n
\right),
\end{eqnarray*}
which tends to 0 as $\mathcal O(\rho^n)$ since $e^{\frac{p\bar B}{u_0}}q=\delta^{-p}=\rho$ and $\rho<1$ due to the assumptions on $p$.
\end{proof}

Note that in this proof the choice $\delta=\left(E ( W_1^\beta ) e^{\frac{p \bar B}{u_0}} \right)^{-\frac1{p+\beta}}$ is optimal as the two components of the above bound are directed opposite to one another and the actual $\delta$ is chosen such that their behavior merges.

\bibliographystyle{plain}
\bibliography{Literatur_Stability}

\end{document}